\newtheorem{thm}{Theorem}[section]
\newtheorem{prop}[thm]{Proposition}
\newtheorem{cjt}[thm]{Conjecture}
\def\R{\mathbb{R}}
\def\N{\mathbb{N}}
\newcommand{\be}{\begin{equation}}
\newcommand{\ee}{\end{equation}}
\newcommand{\bea}{\begin{eqnarray}}
\newcommand{\eea}{\end{eqnarray}}
\newcommand{\beann}{\begin{eqnarray*}}
\newcommand{\eeann}{\end{eqnarray*}}
\newcommand{\benn}{\begin{equation*}}
\newcommand{\eenn}{\end{equation*}}
\def\ra{\rightarrow}
\begin{document}
 
\date{}
\title{Homoclinic Orbits of the FitzHugh-Nagumo Equation: Bifurcations in the
Full System}
\author{John Guckenheimer\thanks{Mathematics Department, Cornell University}
\and Christian Kuehn\thanks{Center for Applied Mathematics, Cornell University}}

\maketitle

\begin{abstract}
This paper investigates travelling wave solutions of the FitzHugh-Nagumo
equation from the viewpoint of fast-slow dynamical systems. These solutions are
homoclinic orbits of a three dimensional vector field depending upon system
parameters of the FitzHugh-Nagumo model and the wave speed. Champneys et al.
[A.R. Champneys, V.~Kirk, E.~Knobloch, B.E. Oldeman, and J.~Sneyd, When
Shilnikov meets Hopf in excitable systems, \textit{SIAM Journal of Applied
Dynamical Systems}, 6(4), 2007] observed sharp turns in the curves of homoclinic
bifurcations in a two dimensional  parameter space. This paper demonstrates
numerically that these turns are located close to the intersection of two curves
in the parameter space that locate non-transversal intersections of invariant
manifolds of the three dimensional vector field. The relevant invariant
manifolds in phase space are visualized. A geometrical model inspired by the
numerical studies displays the sharp turns of the homoclinic bifurcations curves
and yields quantitative predictions about multi-pulse and homoclinic orbits and
periodic orbits that have not been resolved in the FitzHugh-Nagumo model.
Further observations address the existence of canard explosions and mixed-mode
oscillations.
\end{abstract}
 
\section{Introduction}

This paper investigates the three dimensional FitzHugh-Nagumo vector field
defined by:
\bea
\label{eq:fhn}
\epsilon\dot{x}_1&=&x_2\nonumber\\
\epsilon\dot{x}_2&=&\frac{1}{\Delta}
\left(sx_2-x_1(x_1-1)(\alpha-x_1)+y-p\right)=:\frac1\Delta\left(sx_2-f(x_1)+y-p
\right)\\
\dot{y}&=&\frac1s \left(x_1-y\right)\nonumber
\eea   
where $p$, $s$, $\Delta$, $\alpha$ and $\epsilon$ are parameters. Our analysis
views equations \eqref{eq:fhn} as a fast-slow system with two fast variables and
one slow variable. The dynamics of system \eqref{eq:fhn} were studied
extensively by Champneys et al.~\cite{Sneydetal} with an emphasis on homoclinic
orbits that represent travelling wave profiles of a partial differential
equation~\cite{AronsonWeinberger}. Champneys et al.~\cite{Sneydetal} used
numerical continuation implemented in AUTO \cite{Doedel_AUTO2007} to analyze the
bifurcations of \eqref{eq:fhn} for $\epsilon=0.01$ with varying $p$ and $s$. As
in their studies, we choose $\Delta = 5$, $\alpha = 1/10$ for the numerical
investigations in this paper. The main structure of the bifurcation diagram is
shown in Figure \ref{fig:cu_improved}.\\

\begin{figure}[htbp]
\centering
\includegraphics[width=0.8\textwidth]{./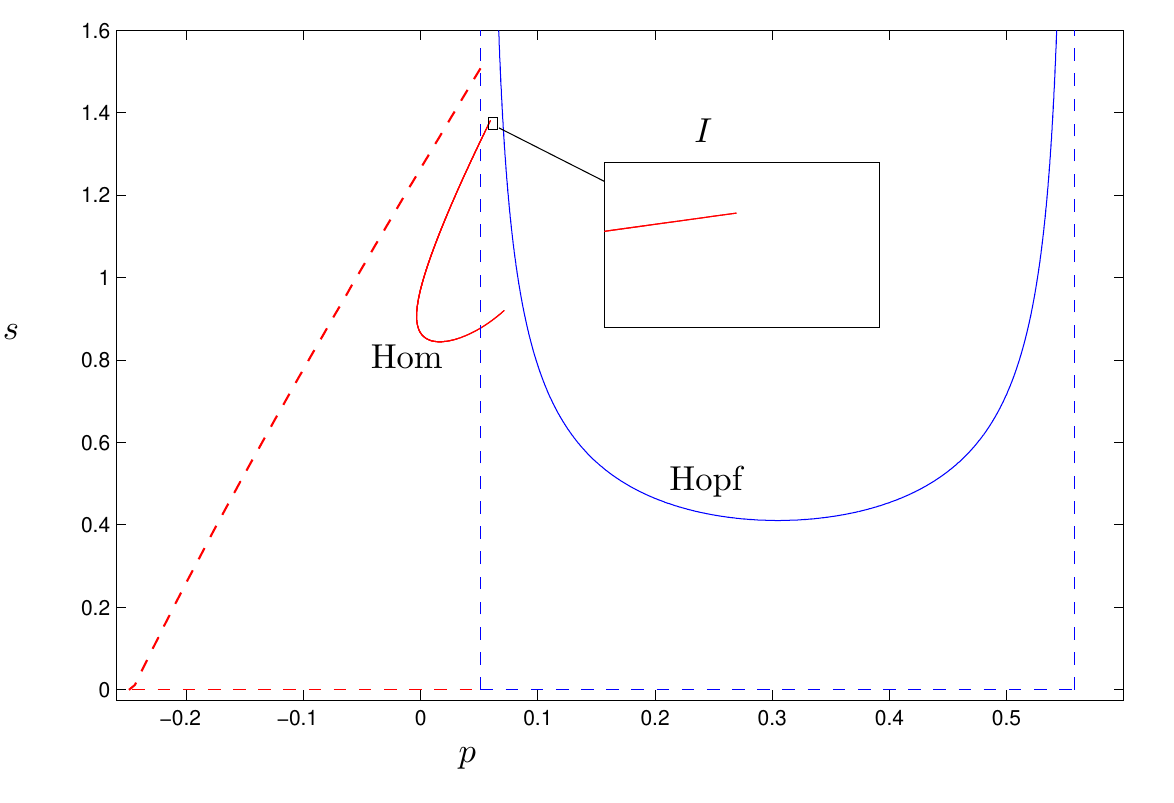}  
\caption{Bifurcation diagram for the FitzHugh-Nagumo equation \eqref{eq:fhn}.
Shilnikov homoclinic bifurcations (solid red) and Hopf bifurcations (solid blue)
are shown for $\epsilon=0.01$. The dashed curves show the singular limit
($\epsilon=0$) bifurcation curves for the homoclinic and Hopf bifurcations; see
\cite{GuckCK1} and Section \ref{sec:basic} for details on the singular limit
part of the diagram.}
 \label{fig:cu_improved}
\end{figure}

Figure \ref{fig:cu_improved} shows a curve of Hopf bifurcations which is
U-shaped and a curve of Shilnikov homoclinic bifurcations which is C-shaped.
Champneys et al. \cite{Sneydetal} observed that the C-curve is a closed curve
which folds back onto itself before it reaches the U-curve, and they discussed
bifurcations that can ``terminate'' a curve of homoclinic bifurcations. Their
analysis does not take into account the multiple-time scales of the vector field
\eqref{eq:fhn}. This paper demonstrates that fast-slow analysis of the
homoclinic curve yields deeper insight into the events that occur at the sharp
turn of the homoclinic curve. We shall focus on the turning point at the top end
of the C-curve and denote this region by $I$.\\

We regard $\epsilon$ in the FitzHugh-Nagumo equation \eqref{eq:fhn} as a small
parameter. In \cite{GuckCK1}, we derived a singular bifurcation diagram which
represents several important bifurcation curves in $(p,s)$-parameter space in
the singular limit $\epsilon=0$. The singular limits of the Hopf and homoclinic
curves are shown in Figure \ref{fig:cu_improved} as dotted lines.\footnote{In
Section \ref{sec:basic} we recall the precise meaning of the singular limit
bifurcation from \cite{GuckCK1} and how they these bifurcations arise when
$\epsilon=0$.} In the singular limit, there is no gap between the Hopf and
homoclinic curves. We demonstrate below in Proposition 2.1 that a gap must
appear for $\epsilon>0$. The main point of this paper is that the termination
point of the C-curve at the end of the gap is due to a fast-slow ``bifurcation''
where the two dimensional stable manifold of an equilibrium is tangent to the
two dimensional unstable manifold of a one dimensional slow 
manifold.\footnote{An analogous tangency plays an important role in the
formation of mixed mode oscillations associated with singular Hopf bifurcations
in fast-slow systems with one fast and two slow
variables~\cite{GuckenheimerSH}.} Since the analysis of \cite{Sneydetal} does
not explicitly consider slow manifolds of the system, this tangency does not
appear in their list of possibilities for the termination of a C-curve. Note
that the slow manifolds of the system are unique only up to ``exponentially
small'' quantities of the form $\exp(-c/\epsilon), c>0$, so our analysis only
identifies the termination point up to exponentially small values of the
parameters.\\

Fast-slow dynamical systems can be written in the form
\bea
\label{eq:basic1}
\epsilon \dot{x}&=&\epsilon\frac{dx}{d\tau}=f(x,y,\epsilon)\\
\dot{y}&=&\frac{dy}{d\tau}=g(x,y,\epsilon)\nonumber
\eea 
where $(x,y)\in\R^m\times \R^n$ and $\epsilon$ is a small parameter
$0<\epsilon\ll 1$. The functions $f:\R^m\times\R^n\times \R\ra \R^m$ and
$g:\R^m\times\R^n\times \R\ra \R^n$ are analytic in the systems studied in this
paper. The variables $x$ are fast and the variables $y$ are slow. We can
change \eqref{eq:basic1} from the slow time scale $\tau$ to the fast time
scale $t=\tau/\epsilon$, yielding
\bea
\label{eq:basic2}
x'&=&\frac{dx}{dt}=f(x,y,\epsilon)\\
y'&=&\frac{dy}{dt}=\epsilon g(x,y,\epsilon)\nonumber
\eea 
In the singular limit $\epsilon\ra 0$ the system \eqref{eq:basic1} becomes a
differential-algebraic equation. The algebraic constraint defines the critical
manifold:
\benn
C_0=\{(x,y)\in\R^m\times \R^n:f(x,y,0)=0\}
\eenn
For a point $p\in C_0$ we say that $C_0$ is normally hyperbolic at $p$ if the
all the  eigenvalues of the $m\times m$ matrix $D_xf(p)$ have non-zero real
parts. A normally hyperbolic subset of $C_0$ is an actual manifold and we can
locally parametrize it by a function $h(y)=x$. This yields the slow subsystem
(or reduced flow) $\dot{y}=g(h(y),y)$ defined on $C_0$. Taking the singular
limit $\epsilon\ra 0$ in \eqref{eq:basic2} gives the fast subsystem (or layer
equations) $x'=f(x,y)$ with the slow variables $y$ acting as parameters.
Fenichel's Theorem \cite{Fenichel4} states that normally hyperbolic critical
manifolds perturb to invariant slow manifolds $C_\epsilon$. A slow manifold
$C_\epsilon$ is $O(\epsilon)$ distance away from $C_0$. The flow on the
(locally) invariant manifold $C_\epsilon$ converges to the slow subsystem on the
critical manifold as $\epsilon\ra 0$. Slow manifolds are usually not unique for
a fixed value of $\epsilon=\epsilon_0$ but lie at a distance
$O(e^{-K/\epsilon_0})$ away from each other for some $K>0$; nevertheless we
shall refer to ``the slow manifold'' for a fast-slow system with the possibility
of an exponentially small error being understood.\\

Section \ref{sec:basic} discusses the fast-slow decomposition of the homoclinic
orbits of the FitzHugh-Nagumo equation in the region $I$. This decomposition has
been used to prove the existence of homoclinic orbits in the system for
$\epsilon$ sufficiently
small~\cite{Carpenter,Hastings1,JonesFHN,JonesKopellLanger,KSS1997}, but
previous work only applies to a situation where the equilibrium point for the
homoclinic orbit is not close to a fold point. At a fold point the critical
manifold of a fast-slow system is locally quadratic and not normally hyperbolic.
This new aspect of the decomposition is key to understanding the sharp turn of
the homoclinic curve. Section \ref{sec:turning} presents a numerical study that
highlights the geometric mechanism for the turning of the C-curve. We visualize
relevant aspects of the phase portraits near the turns of the C-curve. In
Section \ref{sec:contract} we show that exponential contraction of the Shilnikov
return map in the FitzHugh-Nagumo equation explains why n-homoclinic and
n-periodic orbits are expected to be found at parameter values very close to a
primary 1-homoclinic orbit. Section \ref{sec:further} presents two further
observations. We identify where a canard explosion \cite{KruSzm2} occurs and we
note the existence of two different types of mixed-mode oscillations in the
system.

\section{Fast-Slow Decomposition of Homoclinic Orbits}
\label{sec:basic}

We introduce notation used in our earlier work~\cite{GuckCK1}. The critical
manifold of \eqref{eq:fhn} is given by:
\benn
C_0=\{(x_1,x_2,y)\in\R^3:x_2=0\text{ and }y=f(x_1)+p\}
\eenn
It is normally hyperbolic away from the two fold points $x_{1,\pm}$ with
$x_{1,-}<x_{1,+}$ which are found by solving $f'(x_1)=0$ as the local minimum
and maximum of the cubic $f$. Hence $C_0$ splits into three parts:
\benn
C_l=\{x_1<x_{1,-}\}\cap C_0,\quad C_m=\{x_{1,-}\leq x_1 \leq x_{1,+}\}\cap
C_0,\quad C_r=\{x_{1,+}\}\cap C_0
\eenn
We are mostly interested in the two branches $C_l$ and $C_r$ which are of
saddle-type, i.e. points in $C_l$ and $C_r$ are saddle equilibria of the fast
subsystem. The middle branch $C_m-\{x_{1,\pm}\}$ consists of unstable foci for
the fast subsystem. The slow manifolds provided by Fenichel's Theorem will be
denoted by $C_{l,\epsilon}$ and $C_{r,\epsilon}$. The notation for the
two-dimensional stable and unstable manifolds of $C_{l,\epsilon}$ is
$W^s(C_{l,\epsilon})$ and $W^u(C_{r,\epsilon})$ with similar notation for
$C_{r,\epsilon}$; the notation for the associated linear eigenspaces is e.g.
$E^s(C_{l,\epsilon})$. The full system \eqref{eq:fhn} has a unique equilibrium
point which we denote by $q$. For $(p,s)\in I$ and $\epsilon=0.01$ the
dimensions of the stable and unstable manifolds are $\dim(W^u(q))=1$ and
$\dim(W^s(q))=2$ with a complex conjugate pair of eigenvalues for the
linearization at $q$. The equilibrium $q$ is completely unstable inside the
U-curve and the Hopf bifurcations we are interested in near $I$ are all
subcritical \cite{GuckCK1,Sneydetal}.\\

As $\epsilon\ra 0$ the Hopf bifurcation curve converges to a region in $(p,s)$
parameter space bounded by two vertical lines $p=p_\pm$ and the segment
$\{s=0,p_-\leq p\leq p_+\}$; see Figure \ref{fig:cu_improved}. The parameter
values $p_\pm$ are precisely the values when the equilibrium point $q$ coincides
with the fold points $x_{1,\pm}$ \cite{GuckCK1}. This analysis gives one part of
the singular limit bifurcation diagram showing what happens to the Hopf
bifurcation curves for $\epsilon=0$.\\

\begin{figure}[htbp]
\centering
\includegraphics[width=0.45\textwidth]{./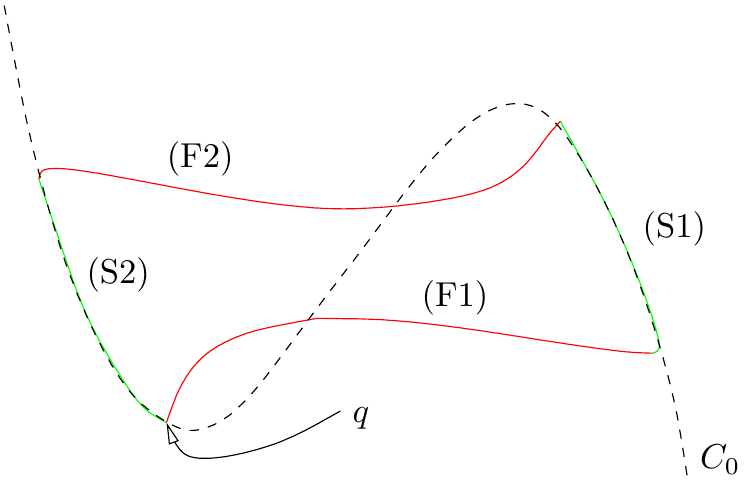}  
\caption{\label{fig:decomp}Sketch of a homoclinic orbit to the unique
equilibrium $q$. Fast (red) and slow (green) segments decompose the orbit into
segments.}
\end{figure}

When $\epsilon$ is small, the homoclinic orbit in $W^u(q) \cap W^s(q)$ can be
partitioned into fast and slow segments. The singular limit of this fast-slow
decomposition has four segments: a fast subsystem heteroclinic connection from
$q$ to $C_r$, a slow segment on $C_r$, a heteroclinic connection from $C_r$ to
$C_l$ and a slow segment back to $q$ on $C_l$; see Figure \ref{fig:decomp}.
Existence proofs for the homoclinic
orbits~\cite{JonesKopellLanger,Hastings1,Carpenter} are based upon analysis of
the transitions between these segments. Trajectories that remain close to a
normally hyperbolic slow manifold must be ``exponentially close'' to the
manifold except for short segments where the trajectory approaches the slow
manifold along its stable manifold and departs along its unstable manifold.
Existence of the homoclinic orbit depends upon how the four segments of its
fast-slow decomposition fit together:

\begin{enumerate}
\item[(F1)] The one dimensional $W^{u}(q)$ approaches $C_r$ along its two
dimensional stable manifold  $W^s(C_{r,\epsilon})$. Intersection of these
manifolds cannot be transverse and occurs only for parameter values that lie
along a curve in the $(p,s)$ parameter plane. 
\item[(S1)] The Exchange Lemma~\cite{JonesKopell} was developed to analyze the
flow map for trajectories that approach $C_{r,\epsilon}$ along its stable
manifold and depart $C_{r,\epsilon}$ along its unstable manifold. 
\item[(F2)] The fast jump from a neighborhood of $C_{r,\epsilon}$ to a
neighborhood of $C_{l,\epsilon}$ occurs along a transversal intersection of the
two dimensional $W^s(C_{l,\epsilon})$ and two dimensional $W^u(C_{r,\epsilon})$. 
\item[(S2)] The connection from $C_{l,\epsilon}$ to $q$ lies close to an
intersection of the two dimensional $W^u(C_{l,\epsilon})$ and the two
dimensional $W^s(q)$. Previous analysis has dealt with parameter regions where
the connection (S2) exists and is transversal, but it \emph{cannot} persist up
to the Hopf curve in the $(p,s)$-plane. 
\end{enumerate}
$\quad$\\

\begin{prop}\label{prop:Hopf} There exists a region in $(p,s)$-parameter space
near the Hopf U-curve where no trajectories close to $C_{l,\epsilon}$ lie in
$W^s(q)$. 
\end{prop}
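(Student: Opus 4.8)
The plan is to establish that the connection (S2) fails throughout a neighbourhood of the Hopf U-curve by pinning down the stable manifold $W^s(q)$ on both sides of the curve. The object to rule out is an incoming homoclinic trajectory: such a trajectory must converge to $q$ in forward time, hence lie in $W^s(q)$, while also shadowing a macroscopic arc of the slow manifold $C_{l,\epsilon}$. I would show that $W^s(q)$ is simply too small to contain any trajectory with the latter property once the parameters are close to the U-curve.

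On and inside the U-curve the argument is immediate. By the properties recalled above, $q$ is completely unstable there, so every eigenvalue of the linearisation at $q$ has positive real part and the stable set reduces to the equilibrium itself, $W^s(q)=\{q\}$. The only trajectory contained in $\{q\}$ is the constant orbit, which does not shadow any arc of $C_{l,\epsilon}$; hence no trajectory close to $C_{l,\epsilon}$ lies in $W^s(q)$.

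The substantive case is the strip just outside the U-curve, where $\dim W^s(q)=2$ and a priori (S2) could survive. Here I would use that the Hopf bifurcation is subcritical: on the side where $q$ regains a two-dimensional stable manifold there is an accompanying unstable periodic orbit $\Gamma$ encircling $q$, and locally $W^s(q)$ is precisely the disk bounded by $\Gamma$, since trajectories outside $\Gamma$ are repelled. As the parameters approach the U-curve the amplitude of $\Gamma$ tends to zero, so $W^s(q)$ is contained in a ball $B(q,\rho)$ with $\rho\to 0$. On the other hand, the return segment of any candidate homoclinic orbit shadows an arc of $C_{l,\epsilon}$ whose length is bounded below uniformly: its entry point is fixed by the transversal jump (F2) from $C_{r,\epsilon}$ and lies a definite distance from $q$ along the slow manifold. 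Such a trajectory must therefore leave $B(q,\rho)$ for $\rho$ small, so it cannot be contained in $W^s(q)$. Combining the two cases produces a two-sided neighbourhood of the U-curve on which no trajectory shadowing $C_{l,\epsilon}$ lies in $W^s(q)$, and in particular a gap of positive width separating the homoclinic curve from the Hopf curve.

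The step I expect to be the main obstacle is the quantitative control of $W^s(q)$ just outside the curve, for two reasons. First, identifying $W^s(q)$ with the disk bounded by $\Gamma$ and estimating $\rho$ in terms of the distance to the U-curve is a genuine piece of local Hopf analysis that must be made uniform as $\epsilon\to 0$. Second, and more delicately, as one slides along the U-curve toward its left fold the equilibrium $q$ migrates onto $C_l$ and sits on $C_{l,\epsilon}$ itself; there one fast eigenvalue degenerates to zero and couples with the slow eigenvalue to form the very complex pair that undergoes the Hopf bifurcation, the lower bound on the return-arc length degrades, and the clean separation between ``$W^s(q)$ near $q$'' and ``the rest of $C_{l,\epsilon}$'' is lost. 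Handling that fold regime is exactly the fast--slow tangency studied in the remainder of the paper, so I would expect the elementary confinement argument to control only the portion of the U-curve bounded away from the fold, with the neighbourhood of the fold requiring the finer analysis.
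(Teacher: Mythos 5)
Your proposal follows essentially the same route as the paper's sketch: both arguments use the positivity of the Lyapunov coefficient (subcriticality) to obtain a completely unstable periodic orbit near the Hopf curve, identify $W^s(q)$ as the topological disk bounded by that orbit, and conclude that this disk shrinks toward $q$ and hence lies at a finite distance from $C_{l,\epsilon}$. The only substantive ingredient the paper makes explicit that you only gesture at is the verification that no Fold--Hopf point occurs on the U-curve near $I$, which is what licenses the center manifold reduction; your closing caveat about the fold regime is the right place to have flagged it.
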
  

\begin{proof} (Sketch) The Lyapunov coefficients of the Hopf bifurcations near
$I$ are positive~\cite{GuckCK1}, so the periodic orbits emanating from these
bifurcations occur in the parameter region to the left of the Hopf curve. The
periodic orbits are completely unstable. By calculating the eigenvalues of the
linearization at the equilibrium we find that there is no Fold-Hopf bifurcation
on the Hopf curve near $I$. Hence center manifold reduction implies that there
will be a region of parameters near the Hopf curve where $W^{s}(q)$ is a
topological disk whose boundary is the periodic orbit. Close enough to the Hopf
curve, $W^{s}(q)$ and the periodic orbit lie at a finite distance from
$C_{l,\epsilon}$ and there is no connection from $C_{l,\epsilon}$ to $q$. 
\end{proof}
$\quad$\\

This proposition implies that the parameter region in which there is a
connection from $C_{l,\epsilon}$ to $q$ is bounded away from the Hopf curve. The
next section shows that the boundary of this parameter region is very close to a
curve along which there are tangential intersections of $W^u(C_{l,\epsilon})$
and $W^s(q)$.\\

\textit{Remark}: As $\epsilon\ra 0$, the C-curve converges to two lines (dashed
red in Figure \ref{fig:cu_improved}) defined by homoclinic and heteroclinic
orbits of the fast subsystem \cite{GuckCK1}. The horizontal segment of the
C-curve to homoclinic orbits of the equilibrium point, and the sloped segment
to heteroclinic orbits from the equilibrium point to the right branch of
the critical manifold. Note that the C-curve terminates on the Hopf curve
in the singular limit. The singular limit analysis does not explain the sharp
turning of the C-curve for $\epsilon>0$ which is the focus of the next section. 

\section{Interaction of Invariant Manifolds}
\label{sec:turning}

The slow manifold $C_{l,\epsilon}$ is normally hyperbolic away from the fold
point $x_{1,-}$, with one attracting direction and one repelling direction. We
recently introduced a method \cite{GuckCK2} for computing slow manifolds of
saddle type. This algorithm is used here to help determine whether there are
connecting orbits from a neighborhood of $C_{l,\epsilon}$ to the equilibrium
point $q$. Our numerical strategy for finding connecting orbits has three
steps: 
\begin{enumerate}
\item
Choose the cross section 
$$\Sigma_{0.09}=\{(x_1,x_2,y)\in\R^3:y=0.09\}$$ 
transverse to $C_{l,\epsilon}$, 
\item
Compute intersections of trajectories in $W^s(q)$ with $\Sigma_{0.09}$. These
points are found either by backward integration from initial conditions that lie
in a small disk $D$ containing $q$ in $W^s(q)$ or by solving a boundary value
problem for trajectories that have one end in $\Sigma_{0.09}$ and one end on the
boundary of $D$.
\item
Compute the intersection $p_l \in C_{l,\epsilon}\cap\Sigma_{0.09}$ with the
algorithm described in  Guckenheimer and Kuehn \cite{GuckCK2} and determine the
directions of the positive and negative eigenvectors of the Jacobian of the fast
subsystem at $p_l$.
\end{enumerate}
$\quad$\\

Figure \ref{fig:move} shows the result of these computations for $\epsilon =
0.01$, $s=1.37$ and three values of $p$.\footnote{The second step above was
carried out with two different initial value solvers, \textit{ode15s} in Matlab
\cite{ShampineReichelt} and dop853~\cite{HairerWannerII}, and with AUTO
\cite{Doedel_AUTO2007} used as a boundary value solver. All three methods
produced similar results.}  The intersections of $W^s(q)$ with $\Sigma_{0.09}$
lies close to $W^s(C_{l,\epsilon})$.\\

\begin{table}[htbp]
\begin{center}
\begin{tabular}{|c|c|}
\hline
$\epsilon$ & D=d(tangency,Hopf)\\
\hline  
$10^{-2}$ & $\approx 1.07\epsilon$ \\
$10^{-3}$ & $\approx 1.00\epsilon$ \\
$10^{-4}$ & $\approx 0.98\epsilon$ \\
\hline
\end{tabular}
\caption{Euclidean distance in (p,s)-parameter space between the Hopf curve and
the location of the tangency point between $W^s(q)$ and
$W^u(C_{l,\epsilon})$\label{tab:dist}.}
\end{center}
\end{table}

Backward trajectories flowing along $C_{l,\epsilon}$ converge to its stable
manifold at a fast exponential rate. This fact also explains the observation
that $W^s(q) \cap \Sigma_{0.09}$ makes a sharp turn. In Figure \ref{fig:move}(a), it
is apparent that the turn lies to the left of $W^u(C_{l,\epsilon}) \cap
\Sigma_{0.09}$ and that $W^s(q) \cap W^u(C_{l,\epsilon})$ is non-empty. In
Figure \ref{fig:move}(c), the turn lies to the right of $W^u(C_{l,\epsilon}) \cap
\Sigma_{0.09}$. We have also computed the distance from the Hopf curve of the
parameters at which $W^s(q)$ and $W^u(C_{l,\epsilon})$ appear to have a
tangential intersection for several different values of $\epsilon$; see Table
\ref{tab:dist} from which we observe that the distance is $O(\epsilon)$.\\

\begin{figure}[htbp]
\centering
\includegraphics[width=0.9\textwidth]{./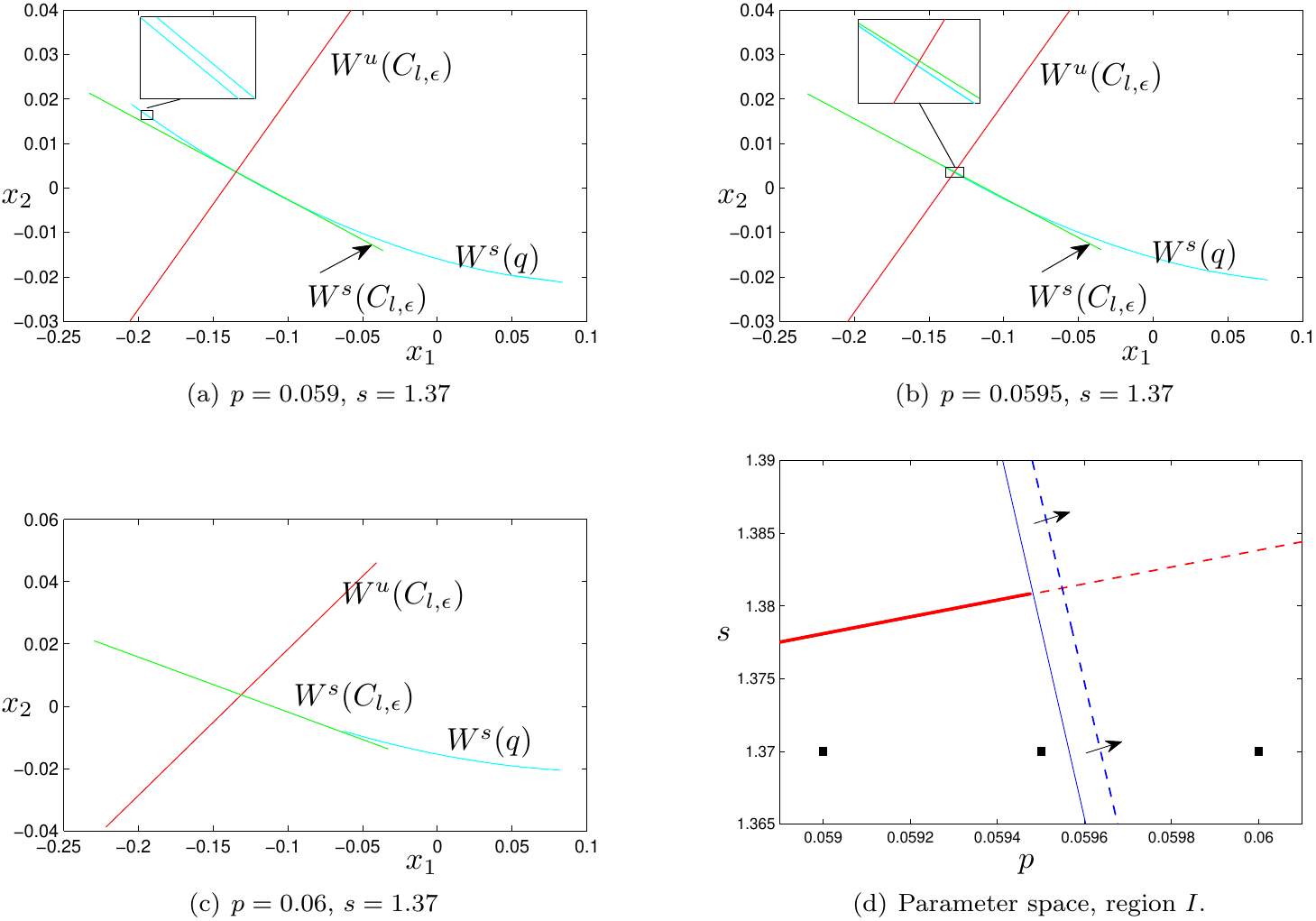}
\caption{\label{fig:move}Figures (a)-(c) show the movement of the stable
manifold $W^s(q)$ (cyan) with respect to $E^u(C_{l,\epsilon})$ (red) and
$E^s(C_{l,\epsilon})$ (green) in phase space on the section $y=0.09$ for
$\epsilon=0.01$. The parameter space diagram (d) shows the homoclinic C-curve
(solid red), an extension of the C-curve of parameters where $W^{u}(q) \cap
W^s(C_{r,\epsilon})$ is nonempty, a curve that marks the tangency of $W^s(q)$ to
$E^u(C_{l,\epsilon})$ (blue) and a curve that marks a distance between
$C_{l,\epsilon}$ and $W^s(q)$ (dashed blue) of $0.01$ where the arrows indicate
the direction in which the distance is bigger than $0.01$. The solid black
squares in (d) show the parameter values for (a)-(c).}
\end{figure}

In Figure \ref{fig:move}(d) the C-curve of homoclinic bifurcations (solid red) has
been computed using continuation in AUTO \cite{Doedel_AUTO2007} as carried out
by Champneys et al. \cite{Sneydetal}. Despite the fact that no homoclinic orbit
exists in part of the region $I$ it is possible to check whether the unstable
manifold $W^u(q)$ reaches a small neighborhood of $W^s(C_{r,\epsilon})$. This
idea has been used in a splitting algorithm \cite{GuckCK1} to calculate where
homoclinic orbits would occur if $W^s(q)$ would not move away from
$C_{l,\epsilon}$ as shown in Figures \ref{fig:move}(a)-\ref{fig:move}(d). This yields the
dashed red curve in Figure \ref{fig:move}(d). On this curve we verified that
$W^s(C_{l,\epsilon})$ and $W^u(C_{r,\epsilon})$ still intersect transversally by
computing those manifolds; see \cite{GuckCK1,GuckCK2} for details.\\

The blue curves in Figure \ref{fig:move}(d) have been obtained by measuring the
Euclidean distances between $W^s(q)$ and $C_{l,\epsilon}$ in the section
$\Sigma_{0.09}$. Along the dashed blue curve the distance between
$C_{l,\epsilon}$ and $W^s(q)$ is $0.01$.  The arrows indicate the direction in
which this distance increases. The solid blue curve marks a tangency of $W^s(q)$
with $E^u(C_{l,\epsilon})$. These calculations demonstrate that the sharp turn
in the C-curve of homoclinic bifurcations occurs very close to the curve where
there is a tangential intersection of  $W^s(q)$ and $W^u(C_{l,\epsilon})$.
Therefore, we state the following conjecture.\\

\begin{cjt}\label{prop:main} The C-curve of homoclinic bifurcations of the
FitzHugh-Nagumo system turns exponentially close to the boundary of the region
where $W^u(C_{l,\epsilon}) \cap W^s(q)$ is nonempty. 
\end{cjt}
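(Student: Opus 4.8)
\noindent\emph{Proposal.} The plan is to convert the homoclinic condition into the intersection of a point and a sharply turning curve on the section $\Sigma=\Sigma_{0.09}$, and then to identify the turning point of the C-curve as a saddle-node (fold) of homoclinic orbits located where that point reaches the tip of the turn. Since $\dim W^u(q)=1$ and $\dim W^s(q)=2$ in the three-dimensional phase space, $W^u(q)\cap\Sigma$ is a single point $a(p,s,\epsilon)$ while $W^s(q)\cap\Sigma$ is a curve $\gamma(p,s,\epsilon)$, and a homoclinic orbit corresponds exactly to the codimension-one condition $a\in\gamma$. First I would record where these objects sit relative to $C_{l,\epsilon}$. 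After the forward transit $q\to C_r\to C_l$ --- whose passages (S1) and (F2) are transverse and hence robust by the Exchange Lemma \cite{JonesKopell} --- the manifold $W^u(q)$ arrives near $C_{l,\epsilon}$ along $W^s(C_{l,\epsilon})$, so $a$ lies exponentially close to $W^s(C_{l,\epsilon})\cap\Sigma$; the incoming branch of $\gamma$ tracks the same set. The remaining forward datum is the splitting $\xi(p,s,\epsilon)$ of $W^u(q)$ from $W^s(C_{r,\epsilon})$ governing (F1), whose zero set is the extended (dashed red) curve of Figure \ref{fig:move}(d).

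The geometric core is the shape of $\gamma$ near $C_{l,\epsilon}$. Following $W^s(q)$ backward, Fenichel theory \cite{Fenichel4} and the strong exponential contraction along the saddle slow manifold force $\gamma$ to align with the forward-unstable eigendirection $E^u(C_{l,\epsilon})$, producing the observed sharp turn whose tip lies $O(e^{-K/\epsilon})$-close to $E^u(C_{l,\epsilon})\cap\Sigma$; this is exactly the set on which the blue tangency curve is defined. The turning point of the C-curve is then a fold of the homoclinic condition $a\in\gamma$: for parameters on one side the point $a$ meets the two branches of the turned curve $\gamma$ in two nearby points, giving two homoclinic orbits, and these collide and disappear as $a$ passes the tip of the turn. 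Because $W^u(C_{l,\epsilon})\cap W^s(q)$ is nonempty precisely when the tip reaches across the unstable fiber of $C_{l,\epsilon}$, the fold of the C-curve coincides, up to the $O(e^{-K/\epsilon})$ ambiguity in $C_{l,\epsilon}$ itself, with the boundary of the region where $W^u(C_{l,\epsilon})\cap W^s(q)$ is nonempty. Quantifying the alignment of the tip with $E^u(C_{l,\epsilon})$ and the quadratic (saddle-node) nature of the tangency between $a$ and $\gamma$ would complete the argument, with Proposition \ref{prop:Hopf} guaranteeing that the relevant boundary is genuinely reached before the Hopf curve.

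The step I expect to be the main obstacle is controlling the passage near $C_{l,\epsilon}$ \emph{uniformly into the fold region}. In region $I$ the equilibrium $q$ lies close to the fold point $x_{1,-}$, where $C_0$ loses normal hyperbolicity, so the Fenichel and Exchange-Lemma estimates underlying both the alignment of $\gamma$ with $E^u(C_{l,\epsilon})$ and the transversality of (F2) degenerate exactly where they are needed. Making the argument rigorous would require a blow-up analysis of the fold in the spirit of Krupa and Szmolyan \cite{KruSzm2}, yielding matching estimates with exponentially small error that persist as the return leg is pushed toward $x_{1,-}$, and then tracking how the exponentially small spacing of slow manifolds produces the $O(\epsilon)$ gap between the tangency and the Hopf curve recorded in Table \ref{tab:dist}. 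A secondary difficulty is proving analytically, rather than numerically, that $a$ meets the tip of $\gamma$ with a genuine quadratic tangency, which is what guarantees that the fold is a clean turning point of the C-curve rather than a more degenerate contact.
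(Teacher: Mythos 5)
Note first that the statement you are addressing is labelled a \emph{conjecture} in the paper, and the paper does not prove it: its support consists of the numerical computations of Section \ref{sec:turning} (Figure \ref{fig:move}, Table \ref{tab:dist}) together with the geometric return-map model of Section \ref{sec:contract}. Your cross-section formulation is essentially that model in different coordinates. Your point $a=W^u(q)\cap\Sigma$ and the sharply folded curve $\gamma=W^s(q)\cap\Sigma$ encode the same two quantities as the model parameters $\lambda_1$ (distance of $W^u(q)$ from the exponentially thin, Exchange-Lemma-controlled domain of the global return) and $\lambda_2$ (signed distance of the image of the return from $W^s(q)$, i.e.\ from the tangency of $W^u(C_{l,\epsilon})$ with $W^s(q)$), and your ``fold of the homoclinic condition at the tip of $\gamma$'' is exactly the parabola $\lambda_1=\pm\rho^{-1}|\lambda_2|^{1/2}$ with $\rho^{-1}=O(e^{-K/\epsilon})$ that produces the exponentially sharp turn at $\lambda_2=0$ in the model. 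One phrase needs repair: $a$ is a single point, so it cannot meet $\gamma$ in two points at one parameter value; what you mean is that a one-parameter path transverse to the C-curve satisfies the condition $a\in\gamma$ at two nearby parameter values, one for each branch of the folded $\gamma$, and these coalesce when $a$ passes the tip. Your use of Proposition \ref{prop:Hopf} to ensure the tangency boundary is reached before the Hopf curve is consistent with the paper.

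The obstacles you name are genuine and are precisely why the statement remains a conjecture rather than a theorem. In the region $I$ the equilibrium has moved past the fold point $x_{1,-}$, so the Fenichel and Exchange-Lemma estimates that would give you the alignment of $\gamma$ with $E^u(C_{l,\epsilon})$ and the exponential localization of $a$ degenerate exactly where the (S2) connection is decided; the paper makes this same point when it notes that the existing existence proofs assume $p=0$ and break down here. A blow-up analysis in the spirit of \cite{KruSzm2}, together with an analytic (rather than numerical) verification that the contact of $W^u(C_{l,\epsilon})$ with $W^s(q)$ is quadratic, would be required to convert your sketch into a proof. As it stands, your proposal is a faithful roadmap that reproduces the paper's geometric model and its predictions, but it does not close the gap, and you are right to flag the unproved steps explicitly.
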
 
$\quad$\\

Note that trajectory segments of types (F1), (S1) and (F2) are still present
along the dashed red curve in Figure \ref{fig:move}(d). Only the last slow connection
(S2) no longer exists. Existence proofs for homoclinic orbits that use
Fenichel's Theorem for $C_{l}$ to conclude that trajectories entering a small
neighborhood of $C_{l,\epsilon}$ must intersect $W^s(q)$ break down in this
region. The equilibrium $q$ has already moved past the fold point $x_{1,-}$ in
$I$ as seen from the singular bifurcation diagram in Figure
\ref{fig:cu_improved} where the blue dashed vertical lines mark the parameter
values where $q$ passes through $x_{1,\pm}$. Therefore Fenichel's Theorem does
not provide the required perturbation of $C_{l,\epsilon}$. Previous proofs
\cite{JonesKopellLanger,Hastings1,Carpenter} assumed $p=0$ and the connecting
orbits of type (S2) do exist in this case.\\ 

Shilnikov proved that there are chaotic invariant sets in the neighborhood of
homoclinic orbits to a saddle-focus in three dimensional vector fields when the
magnitude of the real eigenvalue is larger than the magnitude of the real part
of the complex pair of eigenvalues~\cite{Shilnikov}. The homoclinic orbits of
the FitzHugh-Nagumo vector field satisfy this condition in the parameter region
$I$. Therefore, we expect to find many periodic orbits close to the homoclinic
orbits and parameters in $I$ with ``multi-pulse'' homoclinic orbits that have
several jumps connecting the left and right branches of the slow manifold
\cite{EvansFenichelFeroe}. Without making use of concepts from fast-slow
systems, Champneys et al. \cite{Sneydetal} described interactions of homoclinic
and periodic orbits that can serve to terminate curves of homoclinic
bifurcations. This provides an alternate perspective on identifying phenomena
that occur near the sharp turn of the C-curve in $I$. AUTO can be used to locate
families of periodic orbits that come close to a homoclinic orbit as their
periods grow. \\

\begin{figure}[htbp]
\centering
\includegraphics[width=0.85\textwidth]{./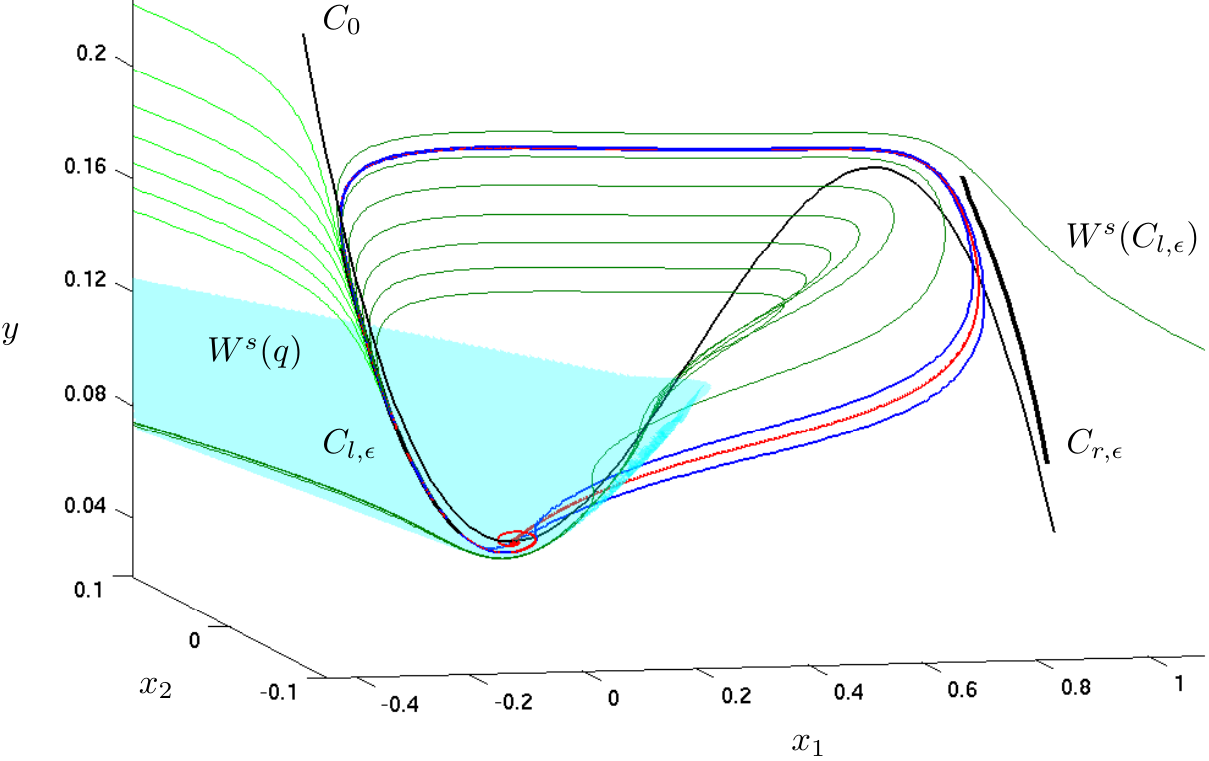}  
\caption{\label{fig:beforeend}Phase space along the C-curve near its sharp turn:
the parameter values $\epsilon=0.01$, $p=0.05$ and $s\approx 1.3254$ lie on the
C-curve. The homoclinic orbit (red), two periodic orbits born in the subcritical
Hopf (blue), $C_0$ (thin black), $C_{l,\epsilon}$ and $C_{r,\epsilon}$ (thick
black) are shown. The manifold $W^s(q)$ (cyan) has been truncated at a fixed
coordinate of $y$. Furthermore $W^s(C_{l,\epsilon})$ (green) is separated by
$C_{l,\epsilon}$ into two components shown here by dark green trajectories
interacting with $C_{m,\epsilon}$ and by light green trajectories that flow left
from $C_{l,\epsilon}$.}
\end{figure}

Figure \ref{fig:beforeend} shows several significant objects in phase space for
parameters lying on the C-curve. The homoclinic orbit and the two periodic
orbits were calculated using AUTO. The periodic orbits were continued in $p$
starting from a Hopf bifurcation for fixed $s\approx 1.3254$. Note that the
periodic orbit undergoes several fold bifurcations \cite{Sneydetal}. We show two
of the periodic orbits arising at $p=0.05$; see \cite{Sneydetal}. The
trajectories in $W^s(C_{l,\epsilon})$ have been calculated using a mesh on
$C_{l,\epsilon}$ and using backward integration at each mesh point and initial
conditions in the linear approximation $E^s(C_{l,\epsilon})$.\\

\begin{figure}[htbp]
\centering
\includegraphics[width=1\textwidth]{./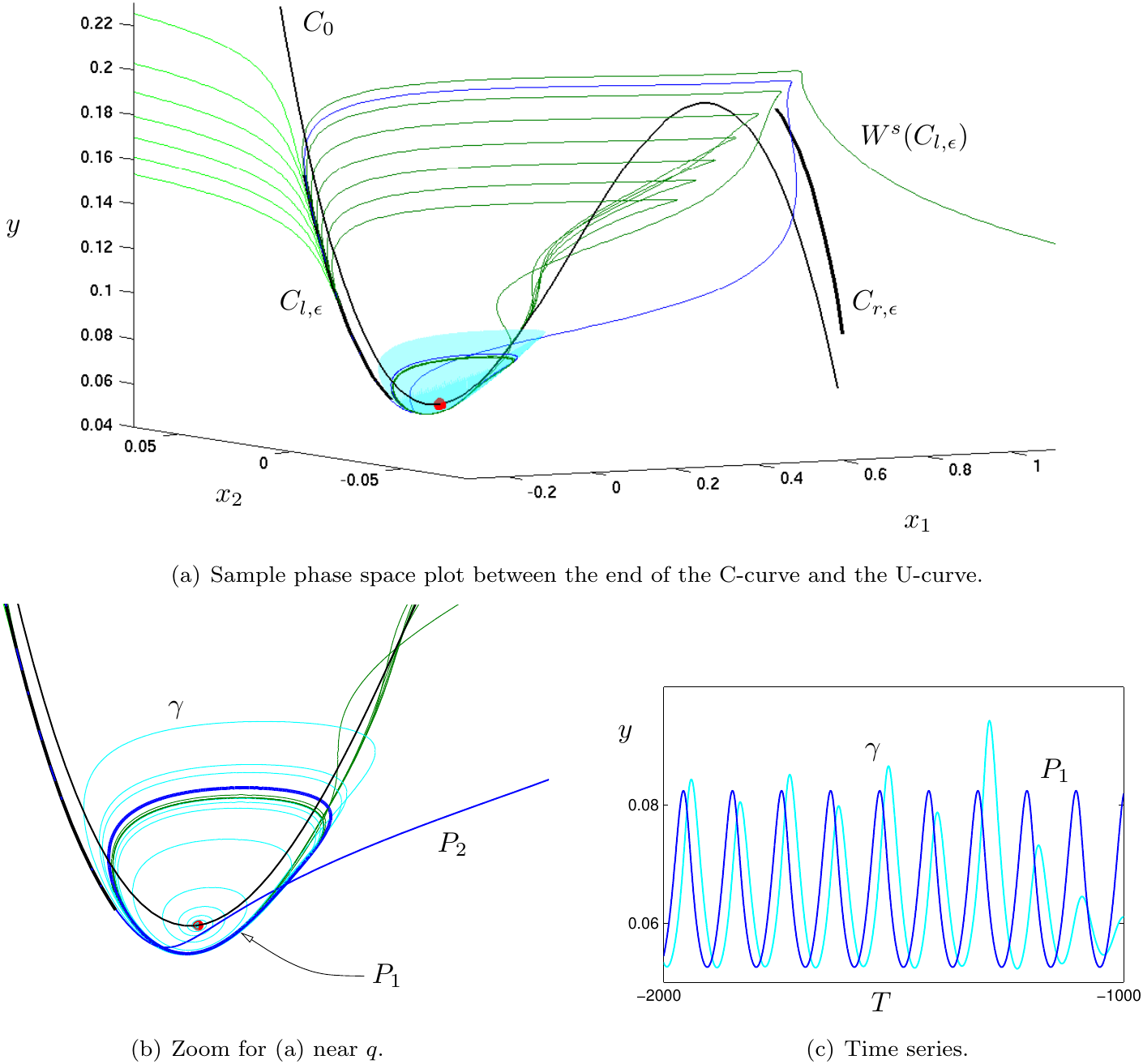}
\caption{The parameter values are $\epsilon=0.01$, $p=0.06$ and $s=1.38$. For
(a) we display two periodic orbits (blue), one with a single large excursion
$P_2$ and one consisting of a small loop $P_1$. We also show $q$ (red dot),
trajectories in $W^s(C_{l,\epsilon})$ (green) and $W^s(q)$ (cyan). In (b) a zoom
near $q$ is shown and we made plotted a single trajectory $\gamma\in W^s(q)$
(cyan). The plot (c) gives a time series of this trajectory $\gamma$ in
comparison to the periodic orbit $P_1$. Note that the trajectories are computed
backward in time, so the final points of the trajectories are on the left of the
figure. A phase shift of time along the periodic orbit would bring the two time
series closer.}
\label{fig:bigandsmall}
\end{figure} 

We observe from Figure \ref{fig:beforeend} that part of $W^s(q)$ lies near
$C_{l,\epsilon}$ as expected for (S2) to be satisfied. This is in contrast to
the situation beyond the turning of the C-curve shown in Figure
\ref{fig:bigandsmall} for $p=0.06$ and $s=1.38$. We observe that $W^s(q)$ is
bounded. Figure \ref{fig:bigandsmall}(a) shows two periodic orbits $P_1$ and $P_2$ obtained
from a Hopf bifurcation continuation starting for $s=1.38$ fixed. $P_2$ is of
large amplitude and is obtained after the first fold bifurcation occurred. $P_1$
is of small amplitude and is completely unstable. A zoom near $P_1$ in Figure
\ref{fig:bigandsmall}(b) and a time series comparison of a trajectory in $W^s(q)$ and
$P_1$ in Figure \ref{fig:bigandsmall}(c) show that
\be
\label{eq:alpha}
\lim_{\alpha}\{p:p\in W^s(q)\text{ and }p\neq q\}=P_1
\ee     
where $\lim_\alpha U$ denotes the $\alpha$-limit set of some set $U\subset
\R^m\times\R^n$. From \eqref{eq:alpha} we can also conclude that there is no
heteroclinic connection from $q$ to $P_1$ and only a connection from $P_1$ to
$q$ in a large part of the region $I$ beyond the turning of the C-curve. Since
$P_1$ is completely unstable, there can be no heteroclinic connections from $q$
to $P_1$. Therefore, double heteroclinic connections between a periodic orbit
and $q$ are restricted to periodic orbits that lie closer to the homoclinic
orbit than $P_1$. These can be expected to exist for parameter values near the
end of the C-curve in accord with  
the conjecture of Champneys et al. \cite{Sneydetal} and the ``Shilnikov''-model
presented in the next section.\\

\textit{Remark}: The recent manuscript
\cite{ChampneysKirkKnoblochOldemanRademacher} extends the results of
\cite{Sneydetal} that motivated this paper. A partial unfolding of a
heteroclinic cycle between a hyperbolic equilibrium point and a hyperbolic
periodic orbit is developed in \cite{ChampneysKirkKnoblochOldemanRademacher} .
Champneys et al. call this codimension two bifurcation an EP1t-cycle and the
point where it occurs in a two dimensional parameter space an EP1t-point. The
manuscript \cite{ChampneysKirkKnoblochOldemanRademacher} does not conclude
whether the EP1t-scenario occurs in the FitzHugh-Nagumo equation. The
relationship between the results of this paper and those of
\cite{ChampneysKirkKnoblochOldemanRademacher} have not yet been clarified.

\section{Homoclinic Bifurcations in Fast-Slow Systems}
\label{sec:contract}

It is evident from Figure \ref{fig:move} that the homoclinic orbits in the
FitzHugh-Nagumo equation exist in a very thin region in $(p,s)$-parameter space
along the C-curve. We develop a geometric model for homoclinic orbits that
resemble those in the FitzHugh-Nagumo equation containing segments of types
(S1), (F1), (S2) and (F2). The model will be seen to be an exponentially
distorted version of the Shilnikov model for a homoclinic orbit to a
saddle-focus~\cite{GH}. Throughout this section we assume that the parameters
lie in a region $I$ the region of the $(p,s)$-plane close to the upper turn of
the C-curve.\\

The return map of the Shilnikov model is constructed from two components: the
flow map past an equilibrium point, approximated by the flow map of a linear
vector field, composed with a regular map that gives a ``global return'' of the
unstable manifold of the equilibrium to its stable manifold~\cite{GH}. Place two
cross-sections $\Sigma_1$ and $\Sigma_2$ moderately close to the equilibrium
point and model the flow map from $\Sigma_1$ to $\Sigma_2$ via the linearization
of the vector field at the equilibrium.\\

\begin{figure}[htbp]
\centering
\includegraphics[width=0.70\textwidth]{./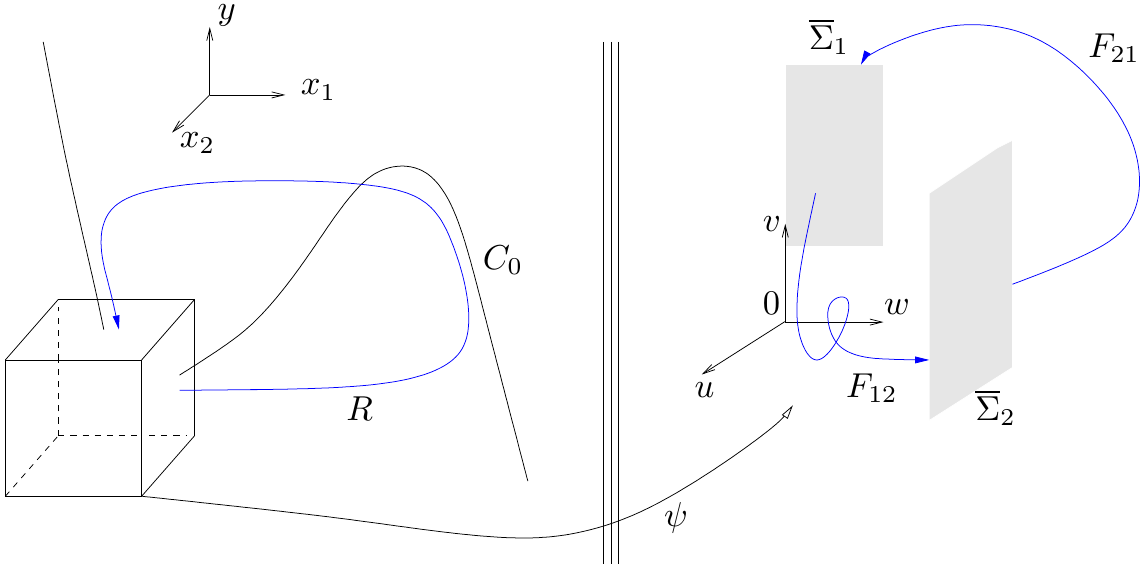}  
\caption{\label{fig:map1}Sketch of the geometric model for the homoclinic
bifurcations. Only parts of the sections $\overline{\Sigma}_i$ for $i=1,2$ are
shown.}
\end{figure}

The degree one coefficient of the characteristic polynomial at the equilibrium
has order $O(\epsilon)$, so the imaginary eigenvalues at the Hopf bifurcation
point have magnitude $O(\epsilon^{1/2})$. The real part of these eigenvalues
scales linearly with the distance from the Hopf curve. Furthermore we note that
the real eigenvalue of the equilibrium point remains bounded away from $0$ as
$\epsilon \to 0$.\\ 

Let $\psi(x_1,x_2,y)=(u,v,w)$ be a coordinate change near $q$ so that
$\psi(q)=0$ and the vector field is in Jordan normal form up to higher order
terms. We denote the sections obtained from the coordinate change into Jordan
form coordinates by $\overline{\Sigma}_1=\psi(\Sigma_1)$ and
$\overline{\Sigma}_2=\psi(\Sigma_2)$; see Figure \ref{fig:map1}. Then the vector
field is
\bea
\label{eq:linear}
u' & = & -\beta u - \alpha v \nonumber\\
v' & = & \alpha u - \beta v \qquad + \text{h.o.t.}\\
w' & = & \gamma\nonumber
\eea
with $\alpha,\beta,\gamma$ positive. We can choose $\psi$ so that the
cross-sections are $\overline{\Sigma}_1=\{u=0,w>0\}$ and
$\overline{\Sigma}_2=\{w=1\}$. The flow map $F_{12}:\overline{\Sigma}_1 \to
\overline{\Sigma}_2$ of the (linear) vector field \eqref{eq:linear} without
higher-order terms is given by
\be
 F_{12}(v,w) =
vw^{\beta/\gamma}\left(\cos\left(-\frac{\alpha}{\gamma}\ln(w)\right),
\sin\left(-\frac{\alpha}{\gamma}\ln(w)\right)\right)
\ee
Here $\beta$ and $\alpha$ tend to $0$ as $\epsilon \to 0$. The domain for
$F_{12}$ is restricted to the interval $v \in [\exp(-2 \pi \beta/\alpha),1]$
bounded by two successive intersections of a trajectory in $W^s(0)$ with the
cross-section $u=0$.\\

The global return map $R:\Sigma_2 \to \Sigma_1$ of the FitzHugh-Nagumo system is
obtained by following trajectories that have successive segments that are near
$W^s(C_{r,\epsilon})$ (fast), $C_{r,\epsilon}$ (slow), $W^u(C_{r,\epsilon}) \cap
W^s(C_{l,\epsilon})$ (fast), $C_{l,\epsilon}$ (slow) and $W^u(C_{l,\epsilon})$
(fast). The Exchange Lemma~\cite{JonesKopell} implies that the size of the
domain of $R$ in $\Sigma_2$ is a strip whose width is exponentially small. As
the parameter $p$ is varied, we found numerically that the image of $R$ has a
point of quadratic tangency with $W^s(q)$ at a particular value of $p$. We also
noted that $W^u(q)$ crosses $W^s(C_{r,\epsilon})$ as the parameter $s$ varies
\cite{GuckCK1}. Thus, we choose to model $R$ by the map
\be
(w,v) = F_{21}(u,v) = ( \sigma v + \lambda_2 - \rho^2 (u-\lambda_1)^2,
\rho(u-\lambda_1) + \lambda_3)
\ee
for $F_{21}$ where $\lambda_1$ represents the distance of $W^u(q) \cap \Sigma_2$
from the domain of $F_{21}$, $\lambda_2$ represents how far the image of
$F_{21}$ extends in the direction normal to $W^s(q)$, $\lambda_3$ is the $v$
coordinate of  $F_{21}(\lambda_1,0)$ and $\rho^{-1}$, $\sigma$ are
$O(e^{-K/\epsilon})$ for suitable $K>0$. We assume further that the domain of
$F_{21}$ is $[\lambda_1,\lambda_1 + \rho^{-1}] \times [-1,1]$. Figure
\ref{fig:map2} depicts $F_{21}$. With these choices, we observe two properties
of the C-curve of homoclinic orbits in the geometric model:
\begin{enumerate}
 \item 
If $\sigma v + \lambda_2 - \rho^2 (u-\lambda_1)^2$ is negative on the domain of
$F_{21}$, then the image of $F_{21}$ is disjoint from the domain of $F_{12}$ and
there are no recurrent orbits passing near the saddle point. Thus, recurrence
implies that $\lambda_2 > -\sigma$.
\item
If $\lambda_2 > 0$, then there are two values of $\lambda_1$ for which the
saddle-point has a single pulse homoclinic orbit. These points occur for values
of $\lambda_1$ for which the $w$-component of  $F_{21}(0,0)$ vanishes:
$\lambda_1 = \pm \rho^{-1} |\lambda_2|^{1/2}$. The magnitude of these values of
$\lambda_2$ is exponentially small.
\end{enumerate}
$\quad$\\

\begin{figure}[htbp]
\centering
\includegraphics[width=0.70\textwidth]{./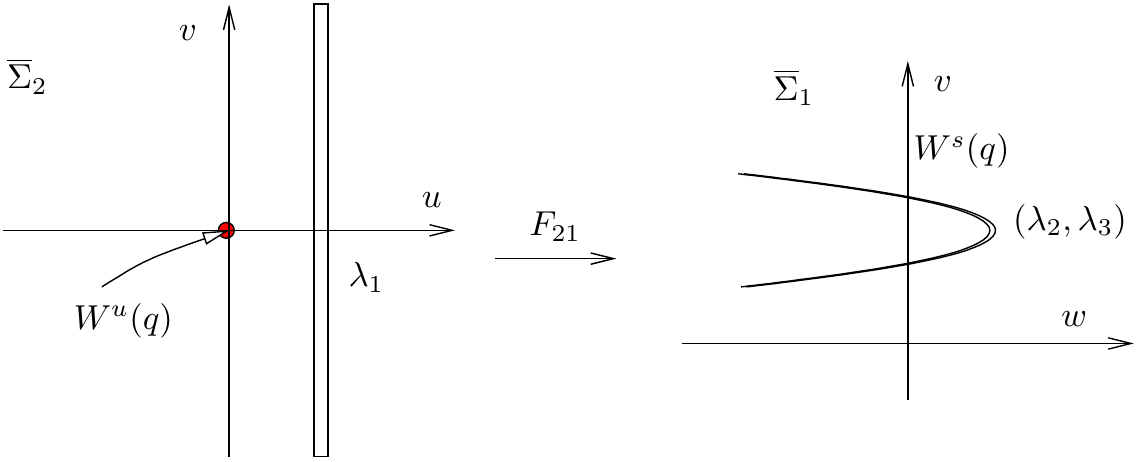}  
\caption{\label{fig:map2}Sketch of the map $F_{21}:\overline{\Sigma}_2\ra
\overline{\Sigma}_1$. The $(u,v)$ coordinates are centered at $W^u(q)$ and the
domain of $F_{21}$ is in the thin rectangle at distance $\lambda_1$ from the
origin. The image of this rectangle is the 
parabolic strip in $\overline{\Sigma}_1$.}
\end{figure}

When a vector field has a single pulse homoclinic orbit to a saddle-focus whose
real eigenvalue has larger magnitude than the real part of the complex
eigenvalues, Shilnikov \cite{Shilnikov} proved that a neighborhood of this
homoclinic orbit contains chaotic invariant sets. This conclusion applies to our
geometric model when it has a single pulse homoclinic orbit. Consequently, there
will be a plethora of bifurcations that occur in the parameter interval
$\lambda_2 \in [0,\sigma]$, creating the invariant sets as $\lambda_2$ decreases
from $\sigma$ to $0$.\\

The numerical results in the previous section suggest that in the
FitzHugh-Nagumo system, some of the periodic orbits in the invariant sets near
the homoclinic orbit can be continued to the Hopf bifurcation of the equilibrium
point. Note that saddle-node bifurcations that create periodic orbits in the
invariant sets of the geometric model lie exponentially close to the curve
$\lambda_2 = 0$ that models tangency of $W^s(q)$ and $W^u(C_{l,\epsilon})$ in
the FitzHugh-Nagumo model. This observation explains why the right most curve of
saddle-node bifurcations in Figure 7 of Champneys et al. \cite{Sneydetal} lies
close to the sharp turn of the C-curve.\\

There will also be curves of heteroclinic orbits between the equilibrium point
and periodic orbits close to the C-curve. At least some of these form
codimension two EP1t bifurcations near the turn of the C-curve as discussed by
Champneys et al. \cite{Sneydetal}. Thus, the tangency between $W^s(q)$ and
$W^u(C_{l,\epsilon})$ implies that there are several types of bifurcation curves
that pass exponentially close to the sharp turn of the C-curve in the
FitzHugh-Nagumo model. Numerically, any of these can be used to approximately
locate the sharp turn of the C-curve.

\section{Canards and Mixed Mode Oscillations}
\label{sec:further}

This section reports two additional observations about the FitzHugh-Nagumo model
resulting from our numerical investigations and analysis of the turning of the
C-curve. 

\subsection{Canard Explosion}
\label{sec:canardexplosion}

The previous sections draw attention to the intersections of $W^s(q)$ and
$W^u(C_{l,\epsilon})$ as a necessary component for the existence of homoclinic
orbits in the FitzHugh-Nagumo system. Canards for the backward flow of this
system occur along intersections of $W^u(C_{l,\epsilon})$ and $C_{m,\epsilon}$.
These intersections form where trajectories that track $C_{l,\epsilon}$ have
continuations that lie along $C_{m,\epsilon}$ which has two unstable fast
directions. We observed from Figures \ref{fig:beforeend} and
\ref{fig:bigandsmall} that a completely unstable periodic orbit born in the Hopf
bifurcation on the U-curve undergoes a canard explosion, increasing its
amplitude to the size of a relaxation oscillation orbit upon decreasing $p$.
This canard explosion happens very close to the intersections of
$W^u(C_{l,\epsilon})$ and $C_{m,\epsilon}$.\\

\begin{figure}[htbp]
\centering
\includegraphics[width=0.9\textwidth]{./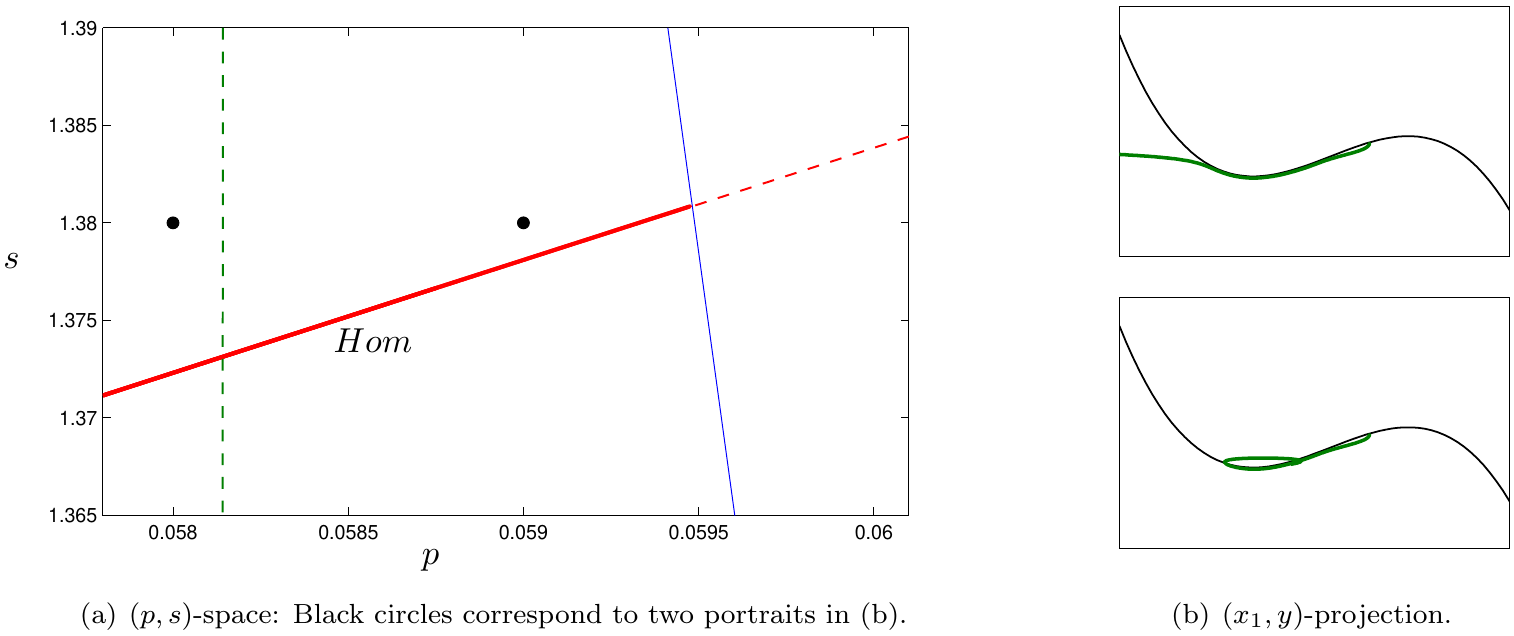}
\caption{\label{fig:canards}The dashed green curve indicates where canard orbits
start to occur along $C_{m,\epsilon}$. For values of $p$ to the left of the
dashed green curve we observe that orbits near the middle branch escape in
backward time (upper panel in (b)). For values of $p$ to the right of the dotted
green curve trajectories near $C_{m,\epsilon}$ stay bounded in backward time.}
\end{figure}

To understand where this transition starts and ends we computed the middle
branch $C_{m,\epsilon}$ of the slow manifold by integrating backwards from
points between the fold points $x_{1,-}$ and $x_{1,+}$ starting close to
$C_{m,0}$ and determined which side of $W^u(C_{l,\epsilon})$ these trajectories
came from. The results are shown in Figure \ref{fig:canards}. The dashed green
curve divides the $(p,s)$ plane into regions where the trajectory that flows
into $C_{m,\epsilon}$ lies to the left of $W^u(C_{l,\epsilon})$ and is unbounded
from the region where the trajectory that flows into $C_{m,\epsilon}$ lies to
the right of $W^u(C_{l,\epsilon})$ and comes from the periodic orbit or another
bounded invariant set. This boundary was found by computing trajectories
starting on $C_{m,0}$ backward in time. In backward time the middle branch of
the slow manifold is attracting, so the trajectory first approaches
$C_{m,\epsilon}$ and then continues beyond its end when $x_1$ decreases below
$x_{1,-}$. Figure \ref{fig:canards} illustrates the difference in the behavior of
these trajectories on the two sides of the dashed green curve. It shows that the 
parameters with canard orbits for the backward
flow have smaller values of $p$ than those for which $W^s(q)$ and
$W^u(C_{l,\epsilon})$ have a tangential intersection. The turns of the C-curve
do not occur at parameters where the backward flow has canards.\\ 

\subsection{Mixed-Mode Oscillations}
\label{sec:mmo}

Mixed-mode oscillations (MMOs) have been observed in many fast-slow systems; see
e.g.
\cite{MilikSzmolyan,RotsteinWechselbergerKopell,RubinWechselberger,
GuckenheimerSH}. MMOs are periodic orbits which consist of sequences of small
and large amplitude oscillations. The notation $L^s$ is used to indicate an MMO
with $L$ large and $s$ small oscillations.\\

\begin{figure}[htbp]
\centering
\includegraphics[width=0.9\textwidth]{./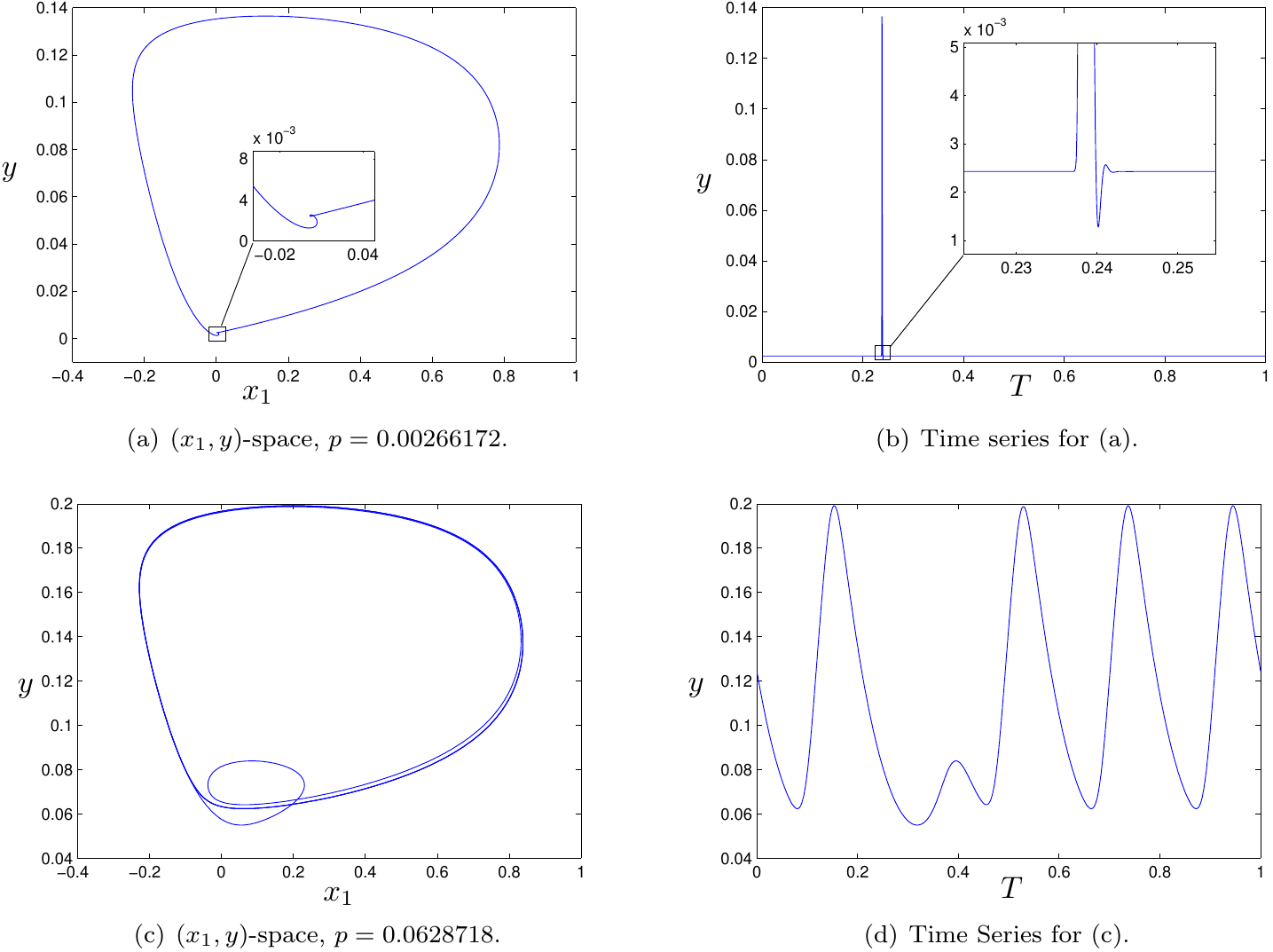}
\caption{\label{fig:mmo}Some examples of mixed-mode oscillations in the
FitzHugh-Nagumo equation. Fixed parameter values are $\epsilon=0.01$ and $s=1$.
Note that the period of the orbits has been rescaled to $1$ in $(b)$ and $(d)$.}
\end{figure}

The FitzHugh-Nagumo equation \eqref{eq:fhn} exhibits MMOs: the periodic orbits
close to the homoclinic orbit make small oscillations near the equilibrium point
in addition to large amplitude relaxation oscillations. A $1^1$ MMO is shown in
Figure \ref{fig:mmo}(a)-(b). It was obtained by switching from the homoclinic
C-curve to a nearby curve of periodic orbits in a continuation framework. Note
that the existence of multi-pulse homoclinic orbits near a Shilnikov homoclinic
orbit \cite{GonchenkoTuraevGaspardNicolis,EvansFenichelFeroe} implies that much
more complicated patterns of MMOs also exist near the homoclinic C-curve. $L^s$
MMOs with very large $L$ and $s$ near the homoclinic C-curve are theoretically
possible although observing them will be very difficult due to the exponential
contraction described in Section \ref{sec:contract}.\\  

In addition to the MMOs induced by the Shilnikov bifurcation we also find MMOs
which exist due to orbits containing canard segments near the completely
unstable slow manifold $C_{m,\epsilon}$. An example of a $4^1$ MMO is shown in
Figure \ref{fig:mmo}(c)-(d) obtained by continuation. In this case the small
oscillations arise due to small excursions reminiscent to MMOs in three-time
scale systems \cite{JalicsKrupaRotstein,KrupaPopovicKopellRotstein}. MMOs of
type $L^1$ with $L=1,2,3,\ldots,O(10^2)$ can easily be observed from
continuation and we expect that $L^1$ MMOs exist for any $L\in \N$. It is likely
that these MMOs can be analyzed using a version of the FitzHugh-Nagumo equation
containing $O(1)$, $O(\epsilon)$ and $O(\epsilon^2)$ terms similar to the one
introduced in \cite{GuckCK1} but we leave this analysis for future work.\\

Figure \ref{fig:mmo} was obtained by varying $p$ for fixed values of
$\epsilon=0.01$ and $s=1$. Thus, varying a single parameter suffices to switch
between MMOs whose small amplitude
oscillations have a different character. In the first case, the small amplitude
oscillations occur when the orbit comes close to a saddle focus rotating around
its stable manifold, while in the second case, the trajectory never approaches
the equilibrium and its small amplitude oscillations
occur when the trajectory flows along the completely unstable slow manifold
$C_{m,\epsilon}$. Different types of MMOs seem to occur very frequently in
single- and multi-parameter bifurcation problems; see
\cite{DesrochesKrauskopfOsinga2} for a recent example. This contrasts with most
work on the analysis of MMOs \cite{Koper-vdP,RotsteinWechselbergerKopell} that
focuses on identifying \emph{the} mechanism for generating MMOs in an example.
The MMOs in the FitzHugh-Nagumo equation show that a fast-slow system with three
or more variables can exhibit MMOs of different types and that one should not
expect a priori that a single mechanism suffices to explain all the MMO
dynamics.

\bibliographystyle{plain}

\end{document}